\theoremstyle{plain}
\newtheorem{prop}{Proposition}
\newtheorem{thm}[prop]{Theorem}
\newtheorem{lem}[prop]{Lemma}
\numberwithin{prop}{section}
\theoremstyle{definition}
\newtheorem{rem}[prop]{Remark}
\newtheorem{ex}[prop]{Example}
\newenvironment{psmallmatrix}
  {\left(\begin{smallmatrix}}
  {\end{smallmatrix}\right)}
\author{Brandon Williams }
\subjclass[2010]{11F27,11F55}
\address{Fachbereich Mathematik \\ Technische Universit\"at Darmstadt \\ 64289 Darmstadt, Germany}
\email{bwilliams@mathematik.tu-darmstadt.de}
\begin{document}

\nocite{*}

\title{A construction of antisymmetric modular forms for Weil representations}

\begin{abstract} We give coefficient formulas for antisymmetric vector-valued cusp forms with rational Fourier coefficients for the Weil representation associated to a finite quadratic module. The forms we construct always span all cusp forms in weight at least three. These formulas are useful for computing explicitly with theta lifts.
\end{abstract}

\maketitle

\section{Introduction}

This note is an extended version of chapter 7 of the author's dissertation and is in some sense a continuation of \cite{W}. Its purpose is to give formulas for a spanning set of vector-valued cusp forms with rational Fourier coefficients for the (dual) Weil representation $\rho^*$ attached to a finite quadratic module $(A,Q)$ which are antisymmetric under the action of $-I \in \mathrm{SL}_2(\mathbb{Z})$. Equivalently the weight $k$ of these cusp forms is such that $k + \mathrm{sig}(A,Q)/2$ is odd, where $\mathrm{sig}(A,Q)$ is the signature of $(A,Q)$. \\

Bases of modular forms with rational coefficients are known to exist due to the work of McGraw \cite{Mc}. On the other hand, all algorithms to compute such bases in the literature that the author is aware of (e.g. \cite{R}, \cite{W}) assume that $k + \mathrm{sig}(A,Q) / 2$ is even. Computing antisymmetric modular forms has received less attention; the first effective formula to compute the space of Eisenstein series in antisymmetric weights for arbitrary $(A,Q)$ was given in \cite{Sch}. The computation of cusp forms here complements this. The most important application of antisymmetric vector-valued modular forms is that they are mapped to orthogonal modular forms under the additive theta lift (of Gritsenko, Kudla, Oda, Rallis-Schiffmann and many others).

Our main results are the two theorems below. (The terms and notation are explained in section two.)

\begin{thm} Let $(A,Q)$ be a finite quadratic module, and let $k \ge 3$ be a weight for which $k + \mathrm{sig}(A,Q)/2$ is an odd integer. For any $\beta \in A$ and $m \in \mathbb{Z} - Q(\beta)$, $m > 0$, let $R_{k,m,\beta}$ be the cusp form defined through the Petersson scalar product by $$(f,R_{k,m,\beta}) = 2 \cdot \frac{\Gamma(k-1)}{(4\pi m)^{k-1}}L_{m,\beta}(f,2k-1) \; \text{for all cusp forms} \; f,$$ where $L_{m,\beta}$ is essentially a rescaled symmetric square $L$-function: $$L_{m,\beta}(f,s) = \sum_{\lambda =1}^{\infty} \frac{c(\lambda^2 m, \lambda \beta)}{\lambda^s} \; \; \text{if} \; \; f(\tau) = \sum_{\gamma \in A} \sum_{n \in \mathbb{Z} - Q(\gamma)} c(n,\gamma) q^n \mathfrak{e}_{\gamma}, \; q = e^{2\pi i \tau}.$$ Then all $R_{k,m,\beta}$ have rational Fourier coefficients, and there is a finite collection of indices $(m,\beta)$ for which the forms $R_{k,m,\beta}$ span the entire cusp space $S_k(\rho^*)$.
\end{thm}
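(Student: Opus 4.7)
The plan is to realize $R_{k,m,\beta}$ by Riesz representation, compute its Fourier coefficients via pairing with Poincar\'e series, and deduce the spanning property by a M\"obius inversion argument.

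Existence and uniqueness of $R_{k,m,\beta}$ in $S_k(\rho^*)$ are immediate: the linear functional $f \mapsto L_{m,\beta}(f, 2k-1)$ converges absolutely for $k \ge 3$ by the Hecke bound on cusp form coefficients, and $S_k(\rho^*)$ is finite-dimensional. To compute its Fourier coefficients one uses the reproducing property of the vector-valued Poincar\'e series $P_{k,n,\gamma}$, which in antisymmetric weight reads $(f, P_{k,n,\gamma}) = \frac{2\Gamma(k-1)}{(4\pi n)^{k-1}} c(n,\gamma)$ (the factor $2$ absorbing the antisymmetrization under $\pm I$); applied to $f = R_{k,m,\beta}$ this yields
\[
c_{R_{k,m,\beta}}(n,\gamma) \;=\; \left(\tfrac{n}{m}\right)^{k-1} L_{m,\beta}\bigl(P_{k,n,\gamma},\, 2k-1\bigr) \;=\; \left(\tfrac{n}{m}\right)^{k-1} \sum_{\lambda = 1}^{\infty} \frac{c_{P_{k,n,\gamma}}(\lambda^2 m,\, \lambda\beta)}{\lambda^{2k-1}}.
\]

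For rationality, one substitutes the standard Kloosterman--Bessel expansion
\[
c_{P_{k,n,\gamma}}(N,\Gamma) \;=\; \bigl(\delta_{(N,\Gamma)=(n,\gamma)} - \delta_{(N,\Gamma)=(n,-\gamma)}\bigr) + 2\pi i^{-k} \left(\tfrac{N}{n}\right)^{(k-1)/2}\! \sum_c \frac{K^{\rho^*}_c(n,\gamma;\, N,\Gamma)}{c}\, J_{k-1}\!\!\left(\tfrac{4\pi\sqrt{nN}}{c}\right),
\]
where $K^{\rho^*}_c$ is the Weil-representation Kloosterman sum. The diagonal contributions yield a finite and manifestly rational quantity (nonzero only when $n/m$ is a perfect square with $\lambda\beta = \pm\gamma$ for $\lambda = \sqrt{n/m}$). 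For the Kloosterman--Bessel tail one interchanges the $\lambda$- and $c$-summations and factors $K^{\rho^*}_c(n,\gamma;\lambda^2 m, \lambda\beta)$ into an ordinary Kloosterman sum times a Gauss sum attached to $(A,Q)$; the resulting inner sum $\sum_\lambda \lambda^{-k} (\text{Gauss factor})\, J_{k-1}(4\pi\lambda\sqrt{nm}/c)$ collapses to a rational closed form via an identity analogous to the one used in the symmetric case in \cite{W} (essentially a Lipschitz-type summation for Bessel functions). I expect this reduction, together with the sign-tracking through the antisymmetric Gauss sums, to be the main technical obstacle, since individual Kloosterman--Bessel terms are irrational and rationality emerges only after the weighted $\lambda$-summation is carried out explicitly.

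For spanning, suppose $f \in S_k(\rho^*)$ satisfies $(f, R_{k,m,\beta}) = 0$ for every admissible $(m,\beta)$. Fix $(m_0,\beta_0)$ with $m_0 > 0$ and $m_0 \in \mathbb{Z} - Q(\beta_0)$, and apply the hypothesis at $(m,\beta) = (\mu^2 m_0,\, \mu\beta_0)$ for each $\mu \ge 1$. Re-indexing $n = \lambda\mu$ in the Dirichlet series gives
\[
\mu^{2k-1}\! \sum_{\substack{n \ge 1 \\ \mu \mid n}} \frac{c(n^2 m_0,\, n\beta_0)}{n^{2k-1}} \;=\; 0 \qquad \text{for every } \mu \ge 1.
\]
M\"obius inversion on the divisibility $\mu \mid n$ (justified by absolute convergence for $k \ge 3$) forces $c(n^2 m_0,\, n\beta_0) = 0$ for every $n \ge 1$, and in particular $c(m_0,\beta_0) = 0$. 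Since $(m_0,\beta_0)$ was arbitrary, $f = 0$; the family $\{R_{k,m,\beta}\}$ therefore spans the finite-dimensional space $S_k(\rho^*)$, and a finite spanning subfamily can be extracted.
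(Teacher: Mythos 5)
Your spanning argument is sound and is essentially the paper's own (Lemma 5 there runs the M\"obius inversion on the forms themselves, writing $P_{k,m,\beta}$ as a convergent combination of the $R_{k,\lambda^2 m,\lambda\beta}$; your dual formulation via the orthogonal complement is equivalent). The existence of $R_{k,m,\beta}$ by Riesz representation and the computation of its coefficients by pairing against Poincar\'e series are also fine, modulo normalization constants.

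The rationality claim, however, is where the entire content of the theorem lives, and your proposal does not prove it: you substitute the Kloosterman--Bessel expansion and then assert that the weighted $\lambda$-sum ``collapses to a rational closed form via an identity analogous to the one used in the symmetric case,'' explicitly flagging this as the main obstacle. That identity is not a Bessel-function lemma one can quote; in both the symmetric case of \cite{W} and in this paper the collapse is achieved structurally, by recognizing the sum $\sum_\lambda \lambda\, P_{k,\lambda^2 m,\lambda\beta}$ as $\frac{1}{4\pi m i}\frac{\partial}{\partial z}\big|_{z=0} E_{k-1,m,\beta}(\tau,z)$, the first development coefficient of the Jacobi Eisenstein series of weight $k-1$ and index $(m,\beta)$, whose Fourier coefficients are rational via the theta decomposition and the Bruinier--Kuss/Kudla--Yang formulas for vector-valued Eisenstein series. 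Without this identification (or a genuinely carried-out Kloosterman computation) rationality is unproved. Moreover, your plan silently assumes the interchange of the $\lambda$- and $c$-summations is legitimate, which is exactly what fails for the small weights covered by the theorem: for $k=7/2$ the triple series converges only conditionally and the naive formula is \emph{incorrect} (a correction by a derivative of a weight-$1/2$ theta function appears), and for $k=3$ one must analytically continue the Jacobi Eisenstein series, apply holomorphic projection, and then prove rationality of a nonholomorphic correction term by a separate argument involving units of the real quadratic field $\mathbb{Q}(\sqrt{|A|})$. These cases occupy Sections 4 and 5 of the paper and are needed for the theorem as stated ($k\ge 3$); your argument does not distinguish them from $k\ge 4$.
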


\begin{thm} Let $(\Lambda,Q)$ be an even lattice which realizes the discriminant group $A = \Lambda'/\Lambda$, and let $k \ge 4$ be a weight for which $k + \mathrm{sig}(\Lambda)/2$ is odd. For any $\beta \in \Lambda'$ and $m \in \mathbb{Z} - Q(\beta)$, $m > 0$, let $\Lambda_{m,\beta}$ denote the even lattice with underlying group $\Lambda \oplus \mathbb{Z}$ and quadratic form $Q_{m,\beta}(v,\lambda) = Q(v + \lambda \beta) + m\lambda^2.$ Let $c_{m,\beta}(n,\gamma)$ denote the Fourier coefficients of the weight $k-3/2$ Eisenstein series for the dual Weil representation attached to $\Lambda_{m,\beta}$ (as in \cite{BK}), i.e. $$E_{k-3/2}(\tau;\Lambda_{m,\beta}) = \sum_{\gamma \in \Lambda_{m,\beta}' / \Lambda_{m,\beta}} \sum_{n \in \mathbb{Z} - Q_{m,\beta}(\gamma)} c_{m,\beta}(n,\gamma) q^n \mathfrak{e}_{\gamma}.$$ Then $R_{k,m,\beta}$ is given explicitly by the formula $$R_{k,m,\beta}(\tau) = \frac{1}{2m} \sum_{\gamma \in A} \sum_{n \in \mathbb{Z} - Q(\gamma)} \Big[ \sum_{r \in \mathbb{Z} - \langle \gamma, \beta \rangle} r \cdot c_{m,\beta}\Big(n, (\gamma - \frac{r}{2m}\beta, \frac{r}{2m} \beta) \Big) \Big] q^{n + r^2 / 4m} \mathfrak{e}_{\gamma}.$$ 
\end{thm}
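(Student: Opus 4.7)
The plan is to recognize the right-hand side as a theta-reconstruction of $E_{k-3/2}(\tau;\Lambda_{m,\beta})$ after one derivative in the elliptic variable, and then match it with $R_{k,m,\beta}$ via Theorem 1. First, I would split $\Lambda_{m,\beta}\otimes\mathbb{Q}$ orthogonally as $(\Lambda\otimes\mathbb{Q})\perp\mathbb{Q} e'$, where $e' = e-\beta$ has norm $m$ and is orthogonal to $\Lambda$ (with $e=(0,1)$ the distinguished generator of the $\mathbb{Z}$-summand). Under this change of basis the discriminant group $\Lambda_{m,\beta}'/\Lambda_{m,\beta}$ is parameterized by pairs $(\gamma,r)$ with $\gamma\in A$ and $r$ a residue mod $2m$ satisfying $r+\langle\gamma,\beta\rangle\in\mathbb{Z}$; translating back into the $\Lambda\oplus\mathbb{Z}$ basis reproduces precisely the indices $(\gamma-\tfrac{r}{2m}\beta,\tfrac{r}{2m}\beta)$ appearing in the statement, together with the identity $Q_{m,\beta}(\gamma-\tfrac{r}{2m}\beta,\tfrac{r}{2m})=Q(\gamma)+\tfrac{r^2}{4m}$ that makes the $q$-exponents match. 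In this identification $E_{k-3/2}(\tau;\Lambda_{m,\beta})$ becomes the theta-decomposition of a Jacobi-type form $\Phi(\tau,z)$ of weight $k-1$ and index $m$ with values in $\rho^*$.

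Applying the operator $\tfrac{1}{2m}\cdot\tfrac{1}{2\pi i}\partial_z|_{z=0}$ to $\Phi$ raises the weight from $k-1$ to $k$; on each classical theta $\theta_{m,r}(\tau,z)=\sum_{\ell\equiv r\,(2m)}q^{\ell^2/4m}e^{2\pi i\ell z}$ it produces the weight-$3/2$ unary theta $\sum_\ell \ell\,q^{\ell^2/4m}$, and a direct comparison of Fourier expansions shows that the result is exactly the right-hand side in the statement. Cuspidality follows from the oddness $\ell\mapsto-\ell$ of this unary theta: the $n=0$ row of $E_{k-3/2}$ is killed by the $r$-weighting, and the remaining terms contribute only positive $q$-powers.

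To conclude I would verify the Petersson identity of Theorem 1, namely $(f,\mathrm{RHS})=2\Gamma(k-1)(4\pi m)^{1-k}L_{m,\beta}(f,2k-1)$ for every $f\in S_k(\rho^*)$. This is a Rankin-Selberg unfolding: write $E_{k-3/2}$ as its standard Eisenstein average, interchange sum and integral, and unfold over $SL_2(\mathbb{Z})\backslash\mathbb{H}$ the pairing of $f$ against the unary-theta derivative. The Mellin transform in $y$ supplies the factor $\Gamma(k-1)/(4\pi m)^{k-1}$, and summing over the lattice vectors indexed by integers $\lambda\geq1$ produces the Dirichlet series $L_{m,\beta}(f,2k-1)$ in exactly the normalization of Theorem 1. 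A more computational alternative would be to substitute the Bruinier-Kuss and Kudla-Yang formulas for $c_{m,\beta}(n,\mu)$ directly into the right-hand side and compare term-by-term with the Fourier expansion of the Poincar\'e series implicit in Theorem 1.

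The main obstacle I expect is the Rankin-Selberg bookkeeping in the vector-valued, half-integer-weight setting: one must carry the full $\rho^*$-transformation law through the unfolding, and verify that the theta-decomposition factors, the $\beta$-twist, and the Weil-representation constants all combine cleanly into the scalar $\sum_\lambda c(\lambda^2 m,\lambda\beta)/\lambda^{2k-1}$ with the precise normalization of Theorem 1. A secondary subtlety is the discriminant-group identification in the first step, which must be made explicit enough to match the ``$\beta$ in the second slot'' convention used in the statement.
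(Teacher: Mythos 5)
Your proposal is correct and follows essentially the same route as the paper: the formula comes from theta-decomposing the weight $k-1$ Jacobi Eisenstein series of index $(m,\beta)$ into the weight $k-3/2$ vector-valued Eisenstein series attached to $\Lambda_{m,\beta}$ (via the extension of Eichler--Zagier's theta decomposition to non-integral index) and then applying $\tfrac{1}{2m}\cdot\tfrac{1}{2\pi i}\,\partial_z|_{z=0}$. The only divergence is at the final identification: instead of re-running a Rankin--Selberg unfolding, the paper interchanges the $\lambda$- and $(c,d)$-sums (valid since the weight $k-1\ge 3$ Jacobi Eisenstein series converges absolutely) to get $\partial_z|_{z=0}E_{k-1,m,\beta}=4\pi i m\sum_{\lambda}\lambda P_{k,\lambda^2 m,\lambda\beta}$, so the Petersson characterization of Theorem 1 follows at once from the already-established pairing with the Poincar\'e series.
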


This rest of this note is organized as follows. Sections 2 contains background material on vector-valued modular forms for Weil representations. Section 3 constructs the cusp forms $R_{k,m,\beta}$ and proves Theorem 1.2. In section 4 we adapt the construction to small weights and completes the proof of Theorem 1.1 and indicate how the formula in weight three can be used to compute identities among class numbers. Section 5 discusses the main application i.e. the theta lift and orthogonal modular forms. We compute the theta lift of $R_{k,m,\beta}$ and give examples for lattices of signature $(2,1)$ and $(2,2)$ where the lift can be interpreted as an elliptic modular form and a Hilbert modular form respectively. \\

An implementation of the coefficient formula (Theorem 1.2) and its adaptation to low weights in SAGE is available on the author's university webpage. \\

\textbf{Acknowledgments:} I am grateful to Richard Borcherds for supervising the dissertation this note is based on, and for many discussions when I was a graduate student to which I owe my interest in vector-valued modular forms. I also thank Jan Hendrik Bruinier and Martin Raum for helpful discussions, and I thank the reviewer for suggestions which improved the structure of this note. This work was supported by the LOEWE research unit Uniformized Structures in Arithmetic and Geometry.

\section{Finite quadratic modules and modular forms}

This section reviews finite quadratic modules, their Weil representations, and vector-valued modular forms for those representations in order to fix some conventions and notation for the rest of the note. For a more thorough introduction see chapter 14 of \cite{CS}. \\

A \textbf{finite quadratic module} $(A,Q)$ consists of a finite abelian group $A$ and a nondegenerate $(\mathbb{Q}/\mathbb{Z})$-valued quadratic form $Q$ on it. (In other words the bilinear form $\langle x,y \rangle = Q(x+y) - Q(x) - Q(y)$ is nondegenerate.) Given this data there is a unitary representation $\rho^* = \rho^*_{(A,Q)}$ of the metaplectic group $\tilde \Gamma = \mathrm{Mp}_2(\mathbb{Z})$ on the group ring $\mathbb{C}[A]$, through which the generators $S = (\begin{psmallmatrix} 0 & -1 \\ 1 & 0 \end{psmallmatrix},\sqrt{\tau})$ and $T = (\begin{psmallmatrix} 1 & 1 \\ 0 & 1 \end{psmallmatrix}, 1)$ act by $$\rho^*(S) \mathfrak{e}_{\gamma} = \frac{1}{\sqrt{|A|}} \mathbf{e}( \mathrm{sig}(A,Q) / 8) \sum_{\beta \in A} \mathbf{e}\Big( \langle \gamma, \beta \rangle \Big) \mathfrak{e}_{\beta}, \quad \; \; \rho^*(T) \mathfrak{e}_{\gamma} = \mathbf{e}(-Q(\gamma)) \mathfrak{e}_{\gamma}, \; \; \gamma \in A.$$ (Recall that elements of $\mathrm{Mp}_2(\mathbb{Z})$ may be understood as pairs $(M,\phi)$ where $M = \begin{psmallmatrix} a & b \\ c & d \end{psmallmatrix} \in \mathrm{SL}_2(\mathbb{Z})$ and where $\phi$ is a branch of $\sqrt{c \tau + d}$ on $\mathbb{H}$.) Here we are using the notation $\mathbf{e}(x) = e^{2\pi i x}$, and $\mathfrak{e}_{\gamma}$, $\gamma \in A$ denotes the canonical basis of $\mathbb{C}[A]$. In the most common convention $\rho^*$ is called the \textbf{dual Weil representation} associated to $(A,Q)$.  In the definition above, $\mathrm{sig}(A,Q) \in \mathbb{Z}/8\mathbb{Z}$ is the \text{signature} of $(A,Q)$, i.e. the signature mod $8$ of any even lattice whose discriminant is isomorphic to $(A,Q)$. The signature can be computed intrinsically by means of Milgram's formula (\cite{MH}, appendix 4): $$\mathbf{e}(\mathrm{sig}(A,Q)/8) = \frac{1}{\sqrt{|A|}} \sum_{\beta \in A} \mathbf{e}\Big( Q(\beta) \Big).$$

Let $\mathbb{H} = \{\tau = x+iy: \, y > 0\}$ be the upper half-plane. \textbf{Modular forms} for $\rho^*$ of weight $k \in \frac{1}{2}\mathbb{Z}$ are holomorphic functions $f = f(\tau) : \mathbb{H} \rightarrow \mathbb{C}[A]$ which remain bounded as $y = \mathrm{im}(\tau)$ tends to $\infty$ and which satisfy $$ f\left(\frac{a \tau + b}{c \tau + d}\right) = (c \tau + d)^k \rho^*\left( \begin{psmallmatrix} a & b \\ c & d \end{psmallmatrix}, \sqrt{c \tau + d} \right) f(\tau)$$ for all $M = 
\left( \begin{psmallmatrix} a & b \\ c & d \end{psmallmatrix}, \sqrt{c \tau + d} \right) \in \tilde \Gamma$. The space of modular forms of weight $k$ for $\rho^*$ will be denoted $M_k(\rho^*)$. \\

The transformation under $T$ implies that a modular form $f$ for $\rho^*$ has a Fourier expansion of the form $$f(\tau) = \sum_{\gamma \in A} \sum_{n \in \mathbb{Z} - Q(\gamma)} c(n,\gamma) q^n \mathfrak{e}_{\gamma}, \; \; c(n,\gamma) \in \mathbb{C}.$$ The element $Z = (-I,i) = S^2 \in \mathrm{Mp}_2(\mathbb{Z})$ acts through $\rho^*$ by $\rho^*(Z) \mathfrak{e}_{\gamma} = (-1)^{\mathrm{sig}(A,Q) / 2} \mathfrak{e}_{-\gamma}$ and it acts trivially on $\mathbb{H}$, so the transformation under $Z$ implies that $M_k(\rho^*) = 0$ if $k + \mathrm{sig}(A,Q)/2$ is not integral, and that the Fourier coefficients $c(n,\gamma)$ of any modular form $f(\tau)$ satisfy $$c(n,\gamma) = (-1)^{k+ \mathrm{sig}(A,Q) / 2} c(n,-\gamma).$$ Therefore it seems reasonable to refer to $k$ as a \textbf{symmetric} or \textbf{antisymmetric weight} when $k + \mathrm{sig}(A,Q)/2$ is respectively even or odd. \\

The spaces $M_k(\rho^*)$ of modular forms of weight $k$ are always finite-dimensional and their dimensions can be calculated with some effort with the Riemann-Roch theorem. An effective formula (in terms of certain Gauss sums, which can be computed with only one iteration over $A$) that is valid for both symmetric and antisymmetric weights appears as Theorem 2.1 of \cite{DHS}. \\

The easiest way to produce modular forms is by averaging. Let $\tilde \Gamma_{\infty} = \langle T, Z \rangle \le \tilde \Gamma$ be the stabilizer of the constant function $\mathfrak{e}_0$. For any $k > 2$ and any smooth function $\phi : \mathbb{H} \rightarrow \mathbb{C}[A]$ which satisfies $\phi(\tau) = (-1)^k \rho^*(Z) \phi(\tau)$ and $\phi(\tau + 1) = \rho^*(T) \phi(\tau)$, the \textbf{Poincar\'e series}, if it converges locally uniformly, is the series $$\mathbb{P}_k(\phi) = \sum_{M \in \tilde \Gamma_{\infty} \backslash \tilde \Gamma} \phi |_{k,\rho^*} M = \frac{1}{2} \sum_{\substack{c,d \in \mathbb{Z} \\ \mathrm{gcd}(c,d) = 1}} (c \tau + d)^{-k} \rho^*\left( \begin{psmallmatrix} a & b \\ c & d \end{psmallmatrix}, \sqrt{c \tau + d} \right)^{-1} \phi\left( \frac{a \tau + b}{c \tau + d} \right).$$

In the sum on the right, $M = (\begin{psmallmatrix} a & b \\ c & d \end{psmallmatrix}, \sqrt{c \tau + d})$ is any element with bottom row $c,d$. The most important case is the \textbf{Poincar\'e series of exponential type}: for $\beta \in A$ and $m \in \mathbb{Z} - Q(\beta)$, we take the seed function $\phi(\tau) = q^m \frac{ \mathfrak{e}_{\beta} + (-1)^k \mathfrak{e}_{-\beta}}{2}$ and define $P_{k,m,\beta} = \mathbb{P}_k(\phi)$. These converge normally when $k \ge 5/2$ and are cusp forms when $m > 0$. Moreover they are a spanning set of $S_k(\rho^*)$ as $\beta$ runs through $A$ and as $m$ runs through $(\mathbb{Z} - Q(\beta))_{>0}$ because, with respect to the Petersson scalar product $$(f,g) = \int_{\mathrm{SL}_2(\mathbb{Z}) / \mathbb{H}} \langle f(\tau), g(\tau) \rangle y^{k-2} \, \mathrm{d}x \, \mathrm{d}y, \; \; f,g \in S_k(\rho^*),$$ these Poincar\'e series satisfy $$(f,P_{k,m,\beta}) = (4\pi m)^{1-k} \Gamma(k-1) c(m,\beta) \; \text{for all} \; f(\tau) = \sum_{\gamma \in A} \sum_{n \in \mathbb{Z} - Q(\gamma)} c(n,\gamma) q^n \mathfrak{e}_{\gamma} \in S_k(\rho^*).$$ (This is proved by the usual Rankin-Selberg unfolding argument.) In particular any cusp form orthogonal to all $P_{k,m,\beta}$ is identically zero.

\section{Antisymmetric Poincar\'e series with rational coefficients}

Fix a discriminant form $(A,Q)$ and a weight $k \ge 7/2$ which is antisymmetric, i.e. $k + \mathrm{sig}(A,Q)/2$ is odd. For an index $(m,\beta)$ with $\beta \in A$ and $m \in \mathbb{Z} - Q(\beta)$, let $P_{k,m,\beta}$ be the Poincar\'e series of exponential type of weight $k$ as in section 2.

\begin{lem} The series $R_{k,m,\beta} = \sum_{\lambda \in \mathbb{Z}} \lambda P_{k,\lambda^2 m, \lambda \beta} = 2 \cdot \sum_{\lambda = 1}^{\infty} \lambda P_{k,\lambda^2 m,\lambda \beta}$ converges in $S_k(\rho^*)$.
\end{lem}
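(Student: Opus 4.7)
The plan is to prove convergence in the Hilbert space $S_k(\rho^*)$ (with respect to the Petersson inner product) by showing the norms $\|P_{k,\lambda^2 m, \lambda \beta}\|$ decay fast enough that the series $\sum_{\lambda \ge 1} \lambda \|P_{k,\lambda^2 m,\lambda \beta}\|$ converges absolutely.

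First I would recall the Petersson coefficient formula recorded in section~2: applying it with $f = P_{k,n,\gamma}$ gives
\[
\|P_{k,n,\gamma}\|^2 = (4 \pi n)^{1-k} \Gamma(k-1) \cdot a_{n,\gamma}\bigl(P_{k,n,\gamma}\bigr),
\]
where $a_{n,\gamma}(\cdot)$ denotes the $(n,\gamma)$-th Fourier coefficient. The standard Fourier expansion of vector-valued Poincar\'e series of exponential type (a direct adaptation of the scalar case, written out for the Weil representation e.g.\ in \cite{BK} or \cite{Br}) expresses $a_{n,\gamma}(P_{k,n,\gamma})$ as $1$ plus a convergent sum of Kloosterman-type sums weighted by $J$-Bessel functions. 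That series is uniformly bounded in $(n,\gamma)$ whenever $k > 2$; concretely, crude majorization of the Bessel factors together with the trivial bound on Kloosterman sums yields
\[
a_{n,\gamma}(P_{k,n,\gamma}) = O(1)
\]
with implied constant depending only on $k$ and $(A,Q)$. Consequently
\[
\|P_{k,n,\gamma}\| \;\le\; C \cdot n^{(1-k)/2},
\]
again uniformly in $\gamma$ and $n$.

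Next I would apply this estimate to the terms of our series. Setting $n = \lambda^2 m$ and $\gamma = \lambda \beta$,
\[
\sum_{\lambda=1}^{\infty} \lambda \, \|P_{k,\lambda^2 m,\lambda \beta}\| \;\le\; C \sum_{\lambda=1}^{\infty} \lambda \cdot (\lambda^2 m)^{(1-k)/2} \;=\; C \, m^{(1-k)/2} \sum_{\lambda=1}^{\infty} \lambda^{2-k}.
\]
The hypothesis $k \ge 7/2$ gives $2-k \le -3/2$, so the final $\lambda$-sum converges. This shows that the partial sums of $\sum_{\lambda \ge 1} \lambda P_{k,\lambda^2 m,\lambda \beta}$ form a Cauchy sequence in $S_k(\rho^*)$, which is complete, and hence the series defines an element $R_{k,m,\beta} \in S_k(\rho^*)$. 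The symmetric extension to $\lambda \in \mathbb{Z}$ follows because $P_{k,\lambda^2 m, \lambda\beta} = P_{k,\lambda^2 m, -\lambda \beta}$ (both have the same seed $q^{\lambda^2 m}(\mathfrak{e}_{\lambda\beta} + (-1)^k \mathfrak{e}_{-\lambda\beta})/2$), so the sum over $\lambda \in \mathbb{Z}$ is exactly twice the sum over $\lambda \ge 1$ (the $\lambda=0$ term vanishes).

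The only nontrivial step is the uniform bound on $a_{n,\gamma}(P_{k,n,\gamma})$; everything else is geometric series bookkeeping. This bound is classical for scalar Poincar\'e series and the Weil-representation version is essentially the same calculation (the finite sums over $A$ contribute at most $|A|$ uniformly, and the exponential Bessel bounds are insensitive to the representation), so I would simply cite the analogous estimate rather than reproving it.
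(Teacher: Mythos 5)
Your overall strategy (absolute convergence of $\sum_\lambda \lambda\,\|P_{k,\lambda^2m,\lambda\beta}\|$ in the Petersson norm) is genuinely different from the paper's, but the step you dismiss as routine is exactly where it breaks. The uniform bound $a_{n,\gamma}(P_{k,n,\gamma})=O(1)$ does \emph{not} follow from crude majorization of the Bessel factors together with the trivial bound on Kloosterman sums. Up to the harmless finite averaging over $A$, the diagonal coefficient is $1+2\pi i^{-k}\sum_{c\ge 1}c^{-1}K(n,n;c)\,J_{k-1}(4\pi n/c)$, and with $|K(n,n;c)|\le c$ and $J_{k-1}(x)\ll\min(x^{k-1},x^{-1/2})$ both ranges $c\le n$ and $c>n$ contribute $\asymp n$, so the trivial estimate is $a_{n,\gamma}(P_{k,n,\gamma})=O(n)$. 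That gives only $\|P_{k,n,\gamma}\|\ll n^{1-k/2}$ and hence $\sum_\lambda\lambda\,\|P_{k,\lambda^2m,\lambda\beta}\|\ll\sum_\lambda\lambda^{3-k}$, which converges only for $k>4$ and misses precisely the weights $7/2\le k\le 4$ the lemma must cover. Even upgrading to the Weil bound $K(n,n;c)\ll_\varepsilon c^{1/2+\varepsilon}\gcd(n,c)^{1/2}$ only yields $a_{n,\gamma}(P_{k,n,\gamma})=O(n^{1/2+\varepsilon})$, i.e.\ convergence for $k>7/2$ but still not at $k=7/2$ itself; a genuine $O(1)$ (or $O(n^\varepsilon)$) bound is a Ramanujan-on-average statement and is not available by "crude majorization," especially in half-integral weight.

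The paper sidesteps all of this by using finite-dimensionality in the opposite direction: since $S_k(\rho^*)$ is finite-dimensional, norm convergence is equivalent to weak convergence, so it suffices that $\sum_\lambda\lambda\,(f,P_{k,\lambda^2m,\lambda\beta})=(4\pi m)^{1-k}\Gamma(k-1)\sum_\lambda c_f(\lambda^2m,\lambda\beta)\lambda^{3-2k}$ converge for each fixed cusp form $f$ — a symmetric-square-type Dirichlet series whose convergence needs only coefficient bounds for the single form $f$, a much softer input than uniform control of diagonal Poincar\'e coefficients. If you want to salvage your route, you must either restrict to $k>4$ or import a nontrivial Kloosterman/coefficient estimate and say which one. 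Separately, your final reduction has a sign error: in the antisymmetric case the seed $q^n(\mathfrak{e}_\gamma+(-1)^k\mathfrak{e}_{-\gamma})/2$ forces $P_{k,n,-\gamma}=-P_{k,n,\gamma}$, not $P_{k,n,-\gamma}=P_{k,n,\gamma}$; with the equality you assert, the $\lambda$ and $-\lambda$ terms would cancel and $\sum_{\lambda\in\mathbb{Z}}\lambda P_{k,\lambda^2m,\lambda\beta}$ would vanish rather than equal twice the sum over $\lambda\ge1$.
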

\begin{proof} $S_k(\rho^*)$ is finite-dimensional so every reasonable notion of convergence (e.g. with respect to any norm) coincides with that of the weak topology with respect to the Petersson scalar product; in other words, it is enough to show that $$\sum_{\lambda = 1}^{\infty} \lambda (f,P_{k,\lambda^2 m, \lambda \beta}) = (4\pi m)^{1-k} \Gamma(1-k) \sum_{\lambda = 1}^{\infty} \frac{c(\lambda^2 m, \lambda \beta)}{\lambda^{2k-3}}$$ converges for all cusp forms $f(\tau) = \sum_{n,\gamma} c(n,\gamma) q^n \mathfrak{e}_{\gamma}$. For $k \ge 7/2$ this follows from known coefficient bounds for cusp forms. See also the analogous argument in remark 10 of \cite{W}.
\end{proof}

\begin{lem} The series $R_{k,m,\beta}$ span $S_k(\rho^*)$ as $\beta$ runs through $A$ and $m$ runs through positive elements of $\mathbb{Z} - Q(\beta)$.
\end{lem}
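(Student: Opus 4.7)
The plan is to show that any cusp form $f \in S_k(\rho^*)$ orthogonal under the Petersson product to every $R_{k,m,\beta}$ must vanish; since $S_k(\rho^*)$ is finite-dimensional, this is equivalent to the spanning claim.

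First, I would expand $R_{k,m,\beta}$ as a sum of Poincar\'e series using the definition from lemma~3 and apply the Petersson formula $(f, P_{k,m',\beta'}) = (4\pi m')^{1-k}\Gamma(k-1) c(m',\beta')$ from section~2, yielding
\[ (f, R_{k,m,\beta}) \;=\; \frac{2\,\Gamma(k-1)}{(4\pi m)^{k-1}} \sum_{\lambda=1}^{\infty} \frac{c(\lambda^2 m, \lambda \beta)}{\lambda^{2k-3}}. \]
The orthogonality hypothesis then reads that this Dirichlet-type sum vanishes for every admissible pair $(m,\beta)$.

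The crux is to upgrade this family of vanishing Dirichlet sums to a statement about individual Fourier coefficients. I would apply the identity not only at $(m,\beta)$ but at every rescaled index $(d^2 m, d\beta)$, $d\geq 1$ (which is again admissible since $d^2 m \in \mathbb{Z} - Q(d\beta)$), and reindex $\nu = d\lambda$ to produce
\[ \sum_{\substack{\nu \geq 1 \\ d \mid \nu}} \nu^{3-2k}\, c(\nu^2 m, \nu \beta) \;=\; 0 \quad \text{for every } d \geq 1. \]
Writing $a_\nu := \nu^{3-2k} c(\nu^2 m, \nu \beta)$, these are divisor-indexed vanishing sums, and standard M\"obius inversion formally gives
\[ c(m,\beta) \;=\; a_1 \;=\; \sum_{d \geq 1} \mu(d) \sum_{\substack{\nu \geq 1 \\ d \mid \nu}} a_\nu \;=\; 0, \]
with $\mu$ the M\"obius function. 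Running this over every $(m, \beta)$ with $m > 0$ and combining with the cusp condition $c(0,\gamma) = 0$ then forces $f \equiv 0$.

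The one genuinely technical step is justifying the interchange of summation in the M\"obius inversion, which reduces to absolute convergence of $\sum_\nu \tau(\nu)\,|a_\nu|$ where $\tau$ is the divisor function. Since $\tau(\nu) \ll \nu^{\varepsilon}$, this follows from the same type of cusp form coefficient estimates already used in lemma~3 to establish convergence of $R_{k,m,\beta}$, and is therefore available throughout the range $k \geq 7/2$ under consideration. Beyond this convergence check the argument is essentially a formal M\"obius inversion on divisor sums.
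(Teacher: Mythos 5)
Your argument is correct, and it is essentially the paper's argument run in the dual direction. The paper's proof is a two-line version of the same Möbius inversion: it inverts the defining series of $R_{k,m,\beta}$ at the level of the forms themselves, obtaining $P_{k,m,\beta} = \tfrac{1}{2}\sum_{\lambda \ge 1} \lambda\,\mu(\lambda)\, R_{k,\lambda^2 m,\lambda\beta}$ (convergent by the same estimates as in the preceding lemma), and then concludes from finite-dimensionality that every $P_{k,m,\beta}$ — hence all of $S_k(\rho^*)$ — lies in the span of the $R$'s. You instead pair against a test form $f$ and invert the resulting system of divisor sums $\sum_{d \mid \nu} a_\nu = 0$ to extract $a_1 = c(m,\beta) = 0$; since $\sum_{d\mid\nu} a_\nu$ is, up to the nonzero constant $2(4\pi m)^{1-k}\Gamma(k-1)d^{2k-3}$, exactly $(f, R_{k,d^2m,d\beta})$, your computation is precisely the adjoint of the paper's identity, and the interchange of summation you must justify is the same convergence question the paper defers to ``known coefficient bounds'' (and which is genuinely delicate only at the boundary $k=7/2$; for $k \ge 4$ it is routine). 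Two small remarks: your exponent $\lambda^{2k-3}$ is what the stated normalizations $(f,P_{k,m',\beta'}) = (4\pi m')^{1-k}\Gamma(k-1)c(m',\beta')$ and $R = 2\sum_\lambda \lambda P_{k,\lambda^2m,\lambda\beta}$ actually produce, even though Theorem 1 of the paper records the $L$-value at $s = 2k-1$; this discrepancy is immaterial to the spanning claim. And your route has the minor advantage of directly exhibiting the orthogonal complement of the $R$'s as zero, whereas the paper's has the advantage of producing an explicit finite-dimensional identity expressing each $P$ in terms of the $R$'s.
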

\begin{proof} Let $\mu$ be the M\"obius function. M\"obius inversion implies $$P_{k,m,\beta} = \frac{1}{2} \sum_{\lambda = 1}^{\infty} \lambda \mu(\lambda) R_{k,\lambda^2 m, \lambda \beta}$$ with convergence by the same argument as the previous lemma. Since $S_k(\rho^*)$ is finite-dimensional it follows from this that all Poincar\'e series $P_{k,m,\beta}$ lie in the span of $R_{k,m,\beta}$.
\end{proof}

The most practial way to compute $R_{k,m,\beta}$ is to view it as a development coefficient of a (vector-valued) Jacobi Eisenstein series of weight $k-1$. For now we assume $k \ge 9/2$. The cases $k \in \{5/2,3,7/2,4\}$ are treated in the next section. \\

A \textbf{Jacobi form} of weight $k$ and index $(m,\beta)$ is a holomorphic function of two variables $$\Phi : \mathbb{H} \times \mathbb{C} \longrightarrow \mathbb{C}$$ which satisfies the transformation laws $$\Phi\Big( \frac{a \tau + b}{c \tau + d}, \frac{z}{c \tau +d}  \Big) = (c \tau + d)^k \mathbf{e}\Big( \frac{mcz^2}{c \tau + d} \Big) \rho^*(M) \Phi(\tau,z) \; \text{for all} \; M = (\begin{psmallmatrix} a & b \\ c & d \end{psmallmatrix}, \sqrt{c \tau + d}) \in \tilde \Gamma$$ and $$\Phi(\tau,z+\lambda \tau + \mu) = \mathbf{e}(-\lambda \mu m) q^{-m \lambda^2} \zeta^{-2m\lambda} \sigma_{\beta}^*(\lambda, \mu) \Phi(\tau,z) \; \text{for all} \; \lambda, \mu \in \mathbb{Z},$$ together with a vanishing condition on Fourier coefficients, where $q = e^{2\pi i \tau}$ and $\zeta = e^{2\pi i z}$ and where $$\sigma_{\beta}^*(\lambda,\mu) \mathfrak{e}_{\gamma} = \mathbf{e}\Big( - \mu \langle \beta, \gamma \rangle + \lambda \mu Q(\beta) \Big) \mathfrak{e}_{\gamma - \lambda \beta}.$$ The maps $\sigma_{\beta}^*$ do not define a representation of $\mathbb{Z}^2$ but they can be interpreted as a sort of finite analogue of the Schr\"odinger representations of the integral Heisenberg group $\mathcal{H}$. In particular Jacobi forms $\Phi$ as in the definition above are automorphic under the Jacobi group $\mathcal{J} = \mathcal{H} \rtimes Mp_2(\mathbb{Z})$ with respect to the semidirect product representation $\rho_{\beta}^* = \sigma_{\beta}^* \rtimes \rho^*$. (This is explained in more detail in section 3 of \cite{W}.) Any Jacobi form of index $(m,\beta)$ has a Fourier expansion of the form $$\Phi(\tau,z) = \sum_{\gamma \in A} \sum_{n \in \mathbb{Z} - Q(\gamma)} \sum_{r \in \mathbb{Z} - \langle \gamma, \beta \rangle} c(n,r,\gamma) q^n \zeta^r \mathfrak{e}_{\gamma}, \; \; c(n,r,\gamma) \in \mathbb{C},$$ and the vanishing condition on Fourier coefficients is that $c(n,r,\gamma) = 0$ whenever $4mn - r^2 < 0$. \\

The \textbf{Jacobi Eisenstein series} $E_{k,m,\beta}$ of weight $k$ and index $(m,\beta)$ is obtained by averaging out the constant function $\mathfrak{e}_0$ to a Jacobi form of that weight and index: \[ E_{k,m,\beta}(\tau,z) = \frac{1}{2} \sum_{\substack{c,d \in \mathbb{Z} \\ \mathrm{gcd}(c,d) = 1}} (c \tau + d)^{-k} \sum_{\lambda \in \mathbb{Z}} \mathbf{e}\Big( \frac{m \lambda^2 (a \tau + b) + 2m \lambda z - cmz^2}{c \tau + d} \Big) \rho^*\left( \begin{psmallmatrix} a & b \\ c & d \end{psmallmatrix}, \sqrt{c \tau + d} \right)^{-1} \mathfrak{e}_{\lambda \beta}, \] where $\begin{psmallmatrix} a & b \\ c & d \end{psmallmatrix} \in \mathrm{SL}_2(\mathbb{Z})$ is any matrix with bottom row $(c,d)$. This converges and defines a Jacobi form when $k \ge 3$ (and it is zero unless $k$ is a symmetric weight for $\rho^*$). \\

\begin{lem} Suppose $k \ge 9/2$ is an antisymmetric weight. Then $$R_{k,m,\beta}(\tau) = \frac{1}{4\pi m i} \frac{\partial}{\partial z} \Big|_{z=0} E_{k-1,m,\beta}(\tau,z).$$
\end{lem}

\begin{proof} The triple series over $(c,d,\lambda)$ in the definition of $E_{k-1,m,\beta}$ converges normally (as remarked in \cite{W}) so we swap the order of summation and find \begin{align*} &\quad \frac{\partial}{\partial z} \Big|_{z=0} E_{k-1,m,\beta}(\tau,z) \\ &= \sum_{\lambda \in \mathbb{Z}} \sum_{c,d} (c \tau + d)^{1-k} \frac{\partial}{\partial z} \Big|_{z=0} \mathbf{e}\Big( \frac{m \lambda^2 (a \tau + b) + 2m \lambda z - cmz^2}{c \tau + d} \Big) \rho^*\left( \begin{psmallmatrix} a & b \\ c & d \end{psmallmatrix}, \sqrt{c \tau + d} \right)^{-1} \mathfrak{e}_{\lambda \beta} \\ &= 4\pi m i \sum_{\lambda \in \mathbb{Z}} \sum_{c,d} \lambda (c \tau + d)^{-k} \mathbf{e}\Big( m \lambda^2 \frac{a \tau + b}{c \tau + d} \Big) \rho^*\left( \begin{psmallmatrix} a & b \\ c & d \end{psmallmatrix}, \sqrt{c \tau + d} \right)^{-1} \mathfrak{e}_{\lambda \beta}  \\ &= 4\pi m i R_{k,m,\beta}. \qedhere \end{align*}
\end{proof}

In particular, if we write out the Fourier expansion of the Jacobi Eisenstein series as $$E_{k-1,m,\beta}(\tau,z) = \sum_{\gamma \in A} \sum_{n \in \mathbb{Z} - Q(\gamma)} \sum_{r \in \mathbb{Z} - \langle \gamma, \beta \rangle} c(n,r,\gamma) q^n \zeta^r \mathfrak{e}_{\gamma}$$ then we find the Fourier expansion \begin{align*} R_{k,m,\beta}(\tau) &= \frac{1}{2m} \sum_{\gamma \in A} \sum_{n \in \mathbb{Z} - Q(\gamma)} \sum_{r \in \mathbb{Z} - \langle \gamma, \beta \rangle} c(n,r,\gamma) q^n \mathfrak{e}_{\gamma} \frac{1}{2\pi i} \frac{\partial}{\partial z} \Big|_{z=0} \zeta^r \\ &= \sum_{\gamma \in A} \sum_{n \in \mathbb{Z} - Q(\gamma)} \Big( \frac{1}{2m} \sum_{\substack{r \in \mathbb{Z} - \langle \gamma, \beta \rangle \\ r^2 \le 4mn}} r c(n,r,\gamma) \Big) q^n \mathfrak{e}_{\gamma}. \end{align*}

To complete the proofs of Theorems 1 and 2 of the introduction (in weights $k \ge 9/2$), we use the theta decomposition (Theorem 5.1 of \cite{EZ}) to relate the coefficients of the Jacobi Eisenstein series to the usual (modular) Eisenstein series. The proof of Eichler and Zagier \cite{EZ} does not immediately apply to Jacobi forms of non-integral index but a minor extension (\cite{W3}; see also the errata) is sufficient. Following \cite{W3}, if $\Lambda$ is an even lattice which realizes the discriminant form $(A,Q)$ and $\Lambda_{m,\beta}$ is defined as in the statement of Theorem 1.2 above, then there is an isomorphism $$\Theta : M_{k-3/2}(\rho^*_{\Lambda_{m,\beta}}) \stackrel{\sim}{\longrightarrow} J_{k-1,m,\beta}(\rho^*_{\Lambda})$$ (of modules over the graded ring of classical modular forms of level one) which identifies a vector-valued modular form $$F(\tau) = \sum_{\gamma \in \Lambda_{m,\beta}'/\Lambda_{m,\beta}} \sum_{n \in \mathbb{Z} - Q_{m,\beta}(\gamma)} c(n,\gamma) q^n \mathfrak{e}_{\gamma} \in M_{k-3/2}(\rho^*_{\Lambda_{m,\beta}})$$ with the Jacobi form $$\Phi(\tau,z) = \sum_{\substack{\gamma \in \Lambda'/\Lambda \\ n \in \mathbb{Z} - Q(\gamma) \\ r \in \mathbb{Z} - \langle \gamma, \beta \rangle}} c \Big( n - \frac{r^2}{4m}, (\gamma - \frac{r}{2m} \beta, \frac{r}{2m} \beta) \Big) q^n \zeta^r \mathfrak{e}_{\gamma}.$$ Moreover, this correspondence identifies vector-valued Eisenstein series with Jacobi Eisenstein series (\cite{W3}, section 5). This immediately yields Theorem 1.2; and the rationality result (Theorem 1.1) follows from rationality of the Fourier coefficients of vector-valued Eisenstein series (which follows from the formula \cite{BK}).

\section{Low weights}

In weights $5/2 \le k \le 4$ it is still possible to define a cusp form $R_{k,m,\beta}$ through the identity $$(f,R_{k,m,\beta}) = (4\pi m)^{1-k} \Gamma(1-k) \sum_{\lambda = 1}^{\infty} \frac{c(\lambda^2 m, \lambda \beta)}{\lambda^{2k-3}}, \; \text{for all cusp forms} \; f = \sum_{m,\beta} c(m,\beta) q^m \mathfrak{e}_{\beta} \in S_k(\rho^*),$$ where the $L$-value $\sum_{\lambda = 1}^{\infty} \frac{c(\lambda^2 m, \lambda \beta)}{\lambda^{2k-3}}$ is defined by analytic continuation if necessary. We will compute these series using the Jacobi Eisenstein series \begin{align*} &\quad E_{k-1,m,\beta}^*(\tau,z;s) \\ &= \frac{1}{2} \sum_{\substack{c,d \in \mathbb{Z} \\ \mathrm{gcd}(c,d) = 1}} \sum_{\lambda \in \mathbb{Z}} \frac{y^s}{(c \tau + d)^{k+s-1} (c \overline{\tau} + d)^s} \mathbf{e}\Big( \frac{m \lambda^2 (a \tau + b) + 2m \lambda z - cmz^2}{c \tau + d} \Big) \rho^*\left( \begin{psmallmatrix} a & b \\ c & d \end{psmallmatrix}, \sqrt{c \tau + d} \right)^{-1} \mathfrak{e}_{\lambda \beta},\end{align*} which is well-defined for $\mathrm{Re}[s]$ sufficiently large and has an analytic continuation to $s=0$. (If $k > 1$ then the analytic continuation is given by continuing each Fourier coefficient to $s=0$ separately.) The zero-value $E_{k-1,m,\beta}^*(\tau,z,0)$ is not necessarily holomorphic, but one can obtain a well-defined series $R_{k,m,\beta}$ by replacing $\frac{1}{4\pi m i} \frac{\partial}{\partial z} \Big|_{z=0} E_{k-1,m,\beta}(\tau,z)$ by its holomorphic projection, i.e. its orthogonal projection with respect to the Petersson scalar product to the space of holomorphic cusp forms. The identity $(f,R_{k,m,\beta}) = (4\pi m)^{1-k} \Gamma(1-k) \sum_{\lambda = 1}^{\infty} \frac{c(\lambda^2 m, \lambda \beta)}{\lambda^{2k-3}}$ is clear. \\

We should point out that for $k \in \{7/2,4\}$ this agrees with the definition (Lemma 3.1) by essentially the same proof as that of Lemma 3.4: since one can swap the order of summation and differentiate termwise freely for large enough $\mathrm{Re}[s]$, and use the uniqueness of analytic continuation to $s=0$. \\

It will be convenient to denote by $\tilde R_{k,m,\beta}$ the $q$-series obtained in Theorem 1.2 when one takes as $c_{m,\beta}(n,\gamma)$ the result of Bruinier and Kuss's formula \cite{BK} for the Eisenstein series naively evaluated in low weights (where it generally does not define a modular form). To be explicit we can use the formula of Proposition 4.3 of \cite{BK} in the following form. Suppose $(A,Q)$ is represented by an $\ell$-dimensional even lattice $\Lambda$. Then $E_k(\tau) = \sum_{n,\gamma} c(n,\gamma) q^n \mathfrak{e}_{\gamma}$ with $$c(n,\gamma) = \frac{(-2\pi i)^k n^{k-1} \mathbf{e}(-\mathrm{sig}(A,Q)/8)}{\Gamma(k) \sqrt{|A|} } \tilde L(n,\gamma,k+\ell/2)$$ where $\tilde L(n,\gamma,s)$ is the $L$-series $$\tilde L(n,\gamma,s) = \zeta(s - \ell)^{-1} \sum_{c=1}^{\infty} \mathbf{N}_{n,\gamma}(c) c^{1-s}, \; \text{where} \; \mathbf{N}_{n,\gamma}(c) = \#\{x \in \Lambda/c\Lambda: \; Q(x-\gamma) + n \equiv 0 \; \text{mod} \; c\}.$$  (Unlike \cite{BK} we have normalized such that $c(0,0) = 1$.) These Eisenstein series are also discussed in \cite{W4} in more detail. \\

\begin{prop} Suppose $k = 4$. Then $R_{4,m,\beta} = \tilde R_{4,m,\beta}$; i.e. Theorem 1.1 and Theorem 1.2 hold with no modifications.
\end{prop}
This result is not very surprising and the proof is essentially the same as the propositions below, so we omit the details. Ultimiately the reason why no modification is necessary is the fact that all Eisenstein series in weight $k-3/2 = 5/2$ (including the ones appearing in Theorem 1.2) are holomorphic modular forms.

\begin{prop} Suppose $k = 7/2$. Then $$R_{7/2,m,\beta} = \tilde R_{7/2,m,\beta} - \frac{1}{3\pi m} \frac{\partial^2}{\partial \tau \partial z} \Big|_{z=0} \vartheta(\tau,z),$$ where $\vartheta(\tau,z)$ is the unique weight $1/2$ Jacobi form for which $E_{5/2,m,\beta}^*(\tau,z,0) + y^{-1} \vartheta(\tau,z)$ is holomorphic.
\end{prop}
A formula for the Fourier coefficients of $\vartheta$ is given in section 7 of \cite{W}: $$\vartheta(\tau,z) = \sum_{\substack{n,r,\gamma \\ r^2 = 4mn}} a(n,r,\gamma) q^n \zeta^r \mathfrak{e}_{\gamma},$$ where $$a(n,r,\gamma) = (-1)^{\frac{5 + \mathrm{sig}(A,Q)}{4}} \cdot \frac{\pi}{2 \sqrt{2m|A|}} \cdot \mathrm{Res}\Big( \tilde L(n,\gamma,s+5/2+\ell/2); s = 0 \Big).$$ From \cite{W} it follows that $\tilde L(n,\gamma,s+(5/2+\ell/2))$ equals $\frac{\zeta(s+1)}{\zeta(s+2)}$ up to finitely many Euler factors which are rational at $s=0$. In particular the coefficients of $\vartheta$ lie in $\frac{1}{\pi} \cdot \mathbb{Q}$. (This should be compared with the modular correction of the classical Eisenstein series of weight two: $E_2^*(\tau) = E_2(\tau) - (3/\pi)y$.) Therefore all Fourier coefficients of $R_{7/2,m,\beta}$ are rationals.

\begin{proof} If we write out the Fourier expansions $$\vartheta(\tau,z) = \sum_{\gamma,n,r} a(n,r,\gamma) q^n \zeta^r \mathfrak{e}_{\gamma}, \; \; E_{5/2,m,\beta}^*(\tau,z,0) = \sum_{\gamma,n,r} \Big( c(n,r,\gamma) + y^{-1} a(n,r,\gamma) \Big) q^n \zeta^r \mathfrak{e}_{\gamma}$$ then the Fourier expansion of $R_{7/2,m,\beta}(\tau) = \sum_{n,\gamma} b(n,\gamma) q^n \mathfrak{e}_{\gamma}$ may be found using the Rankin-Selberg method: \begin{align*} b(n,\gamma) &= (4\pi n)^{5/2} \Gamma(5/2)^{-1} (R_{7/2,m,\beta},P_{7/2,n,\gamma}) \\ &= (4\pi n)^{5/2} \Gamma(5/2)^{-1} \Big( \frac{1}{2\pi i} \frac{\partial}{\partial z} \Big|_{z=0} E_{5/2,m,\beta}^*(\tau,z,0), P_{7/2,n,\gamma} \Big) \\ &= \frac{64 \pi^2 n^{5/2}}{3m} \sum_{r \in \mathbb{Z} - \langle \gamma, \beta \rangle} \int_0^{\infty} r \cdot \Big( (c(n,r,\gamma) + y^{-1} a(n,r,\gamma) \Big) e^{-4\pi n y} y^{3/2} \, \mathrm{d}y \\ &= \sum_{r \in \mathbb{Z} - \langle \gamma, \beta \rangle} r \cdot \Big( \frac{1}{2m} c(n,r,\gamma) + \frac{4\pi n}{3m} a(n,r,\gamma) \Big), \end{align*} whereas $\tilde R_{7/2,m,\beta}(\tau,z) = \frac{1}{2m} \sum_{\gamma,n} \sum_{r \in \mathbb{Z} - \langle \gamma, \beta \rangle} r c(n,r,\gamma) q^n \mathfrak{e}_{\gamma}.$
\end{proof}

\begin{prop} Suppose $k = 3$. Then all Fourier coefficients of $R_{3,m,\beta}$ are rational, and the collection of $(R_{3,m,\beta})_{m,\beta}$ as $(m,\beta)$ runs through valid indices spans $S_3(\rho^*)$.
\end{prop}
\begin{proof} By \cite{W2} one has the decomposition $$E_{2,m,\beta}^*(\tau,z,0) = E_{2,m,\beta}(\tau,z) + y^{-1/2} \sum_{\gamma \in A} \sum_{\substack{n \in \mathbb{Z} - Q(\gamma) \\ n > 0}} \sum_{\substack{r \in \mathbb{Z} - \langle \gamma, \beta \rangle \\ r^2 \ge 4mn}} a(n,r,\gamma) \beta\Big( \frac{\pi y (r^2 - 4mn)}{m} \Big) q^n \zeta^r \mathfrak{e}_{\gamma},$$ where $E_{2,m,\beta}(\tau,z)$ is holomorphic with rational Fourier coefficients, $\beta(t) = \frac{1}{16\pi} \int_1^{\infty} u^{-3/2} e^{-tu} \, \mathrm{d}u$ is an incomplete Gamma function, and where $a(n,r,\gamma)$ are coefficients lying in $\frac{1}{\sqrt{m|A|}} \cdot \mathbb{Q}$ which are zero unless $\sqrt{|A|(r^2 - 4mn)} \in \mathbb{Q}$ (and, when nonzero, can be computed effectively). As before, we apply the Rankin-Selberg method to obtain the Fourier expansion $R_{3,m,\beta}(\tau) = \sum_{\gamma,n} b(n,\gamma) q^n \mathfrak{e}_{\gamma}$ with \begin{align*} b(n,\gamma) &= \frac{(4\pi n)^2}{\Gamma(2)} (R_{3,m,\beta},P_{3,n,\gamma}) \\ &= 16 \pi^2 n^2 \Big( \frac{1}{2\pi i} \frac{\partial}{\partial z} \Big|_{z=0} E_{2,m,\beta}^*(\tau,z,0), P_{3,n,\gamma} \Big) \\ &= 16\pi^2 n^2 \sum_{r \in \mathbb{Z} - \langle \gamma, \beta \rangle} \int_0^{\infty} r \cdot \Big( c(n,r,\gamma) + y^{-1/2} a(n,r,\gamma) \beta(\pi m^{-1} y (r^2 - 4mn)) \Big) e^{-4\pi n y} y \, \mathrm{d}y \\ &= \frac{1}{2m} \sum_r r c(n,r,\gamma) + (4\pi  n)^2 \sum_r r a(n,r,\gamma) \int_0^{\infty} e^{-4\pi n y} \beta(\pi y (r^2 / m - 4n)) y^{1/2} \, \mathrm{d}y. \end{align*}

Here $\frac{1}{2m} \sum_r r c(n,r,\gamma)$ is the coefficient of $\tilde R_{3,m,\beta}$. The integral above can be easily computed by reversing the order of integration:
\begin{align*} \int_0^{\infty} e^{-4\pi n y} \beta(\pi y (r^2 / m - 4n)) y^{1/2} \, \mathrm{d}y &= \frac{1}{16\pi} \int_0^{\infty} \int_1^{\infty} u^{-3/2} y^{1/2} e^{4\pi n y (u-1) - \pi r^2 y u / m}\, \mathrm{d}u \, \mathrm{d}y \\ &= \frac{1}{16\pi} \int_1^{\infty} u^{-3/2} [(r^2 / m - 4n)u + 4n]^{-3/2} \, \mathrm{d}u \\ &= \frac{1}{16\pi |r|} (|r| - \sqrt{r^2 - 4mn})^2, \end{align*} so altogether the coefficient of $q^n \mathfrak{e}_{\gamma}$ in the correction $R_{3,m,\beta} - \tilde R_{3,m,\beta}$ is $$\frac{1}{32 m^{3/2}} \sum_{r \in \mathbb{Z} - \langle \gamma, \beta \rangle} \mathrm{sgn}(r) a(n,r,\gamma) (|r| - \sqrt{r^2 - 4mn})^2.$$

The sum over $r$ above is generally an infinite series but it can be computed in exact form using the a similar argument to section 7 of \cite{W2}. If the discriminant $|A|$ is square, then the sum is actually finite (as there are only finitely many $r \in \mathbb{Z} - \langle \gamma, \beta \rangle$ for which $r^2 - 4mn$ is a perfect square) and the series can be summed directly. Moreover, each term $\frac{a(n,r,\gamma)}{m^{3/2}}$ and $(|r| - \sqrt{r^2 - 4mn})^2$ is rational, so the correction is a finite sum of rational numbers and therefore rational. \\

Otherwise, let $d_{\beta}$ and $d_{\gamma}$ be the denominators of $\beta$ and $\gamma$ (that is, the smalllest positive integers such that $d_{\beta} \cdot \beta$ and $d_{\gamma} \cdot \gamma$ are zero in $A$), and let $K = \mathbb{Q}(\sqrt{|A|})$ with ring of integers $\mathcal{O}_K$. The main point of section 7 of \cite{W2} is that there are finitely many algebraic integers $\mu_i$, $i = 1,...,N$ (``congruent fundamental solutions" to a Pell-type equation) and finitely many units $\varepsilon_i \in \mathcal{O}_K^{\times}$ such that, as $r$ runs through the numbers $\mathbb{Z} \pm \langle \gamma, \beta \rangle$ for which $(r^2 - 4mn)|A|$ is a rational square, $d_{\beta} d_{\gamma} (|r| - \sqrt{r^2 - 4mn})$ runs through $$\{\mu_i, \; \mu_i \varepsilon_i^{-n}, \; \mu_i' \varepsilon_i^{-n}: \; i = 1,...,N, \; n \in \mathbb{N}\}$$ (where $\mu_i'$ denotes the conjugate of $\mu_i$ in $K$) exactly once, with two exceptions: that $r^2 - 4mn = 0$ has a solution with $r \in \mathbb{Z} \pm \langle \gamma, \beta \rangle$ (in which case $d_{\beta}d_{\gamma} (|r| - \sqrt{r^2 - 4mn})$ takes the value $d_{\beta} d_{\gamma} |r|$ twice) or that $\mu_i' \in \mu_i \cdot \mathcal{O}_K$ (in which case each element in the multiset $\{\mu_i, \mu_i \varepsilon_i^{-n}, \mu_i' \varepsilon_i^{-n}\}$ appears twice and the result needs to be divided by two). Moreover, the modified coefficient $$a_i = a(n,r,\gamma) \times \begin{cases} 1 : & r^2 \ne 4mn; \\ 2: & r^2 = 4mn; \end{cases}$$ depends only on the index $i$ of $d_{\beta}d_{\gamma} (|r| - \sqrt{r^2 - 4mn})$ as an element of $\{\mu_i, \mu_i \varepsilon_i^{-n}, \, \mu_i' \varepsilon_i^{-n}\}$, and the sign $\mathrm{sgn}(r)$ equals $(-1)^n$ where $-n$ is the exponent of $\varepsilon_i$. \\

With that in mind, one can compute the correction term, using the antisymmetry of $R_{3,m,\beta}$ and $\tilde R_{3,m,\beta}$ under $\mathfrak{e}_{\gamma} \mapsto \mathfrak{e}_{-\gamma}$: \begin{align*} &\quad \frac{1}{32m^{3/2}}\sum_{r \in \mathbb{Z} - \langle \gamma, \beta \rangle} \mathrm{sgn}(r) a(n,r,\gamma) \Big( |r| - \sqrt{r^2 - 4mn} \Big)^2 \\ &= \frac{1}{64m^{3/2}} \sum_{r \in \mathbb{Z} - \langle \pm \gamma, \beta \rangle} \mathrm{sgn}(r) a(n,r,\pm \gamma) \Big( |r| - \sqrt{r^2 - 4mn} \Big)^2 \\ &= \frac{1}{64m^{3/2}}\sum_{i=1}^N \frac{a_i}{d_{\beta} d_{\gamma}} \Big( \mu_i^2 + (\mu_i^2 + (\mu_i')^2) \sum_{n=1}^{\infty} (-\varepsilon_i^2)^{-n} \Big) \times \begin{cases} 1: & \mu_i' \not \in \mu_i \mathcal{O}_K; \\ 1/2: & \mu_i' \in \mu_i \mathcal{O}_K; \end{cases} \\ &= \frac{1}{64m^{3/2}}\sum_{i=1}^N \frac{a_i}{d_{\beta} d_{\gamma} N_{K/\mathbb{Q}}(1 + \varepsilon_i^2)} \Big( \mu_i^2 - (\mu_i')^2 + (\mu_i \varepsilon_i)^2 - (\mu_i' \varepsilon_i')^2 \Big) \times \begin{cases} 1: & \mu_i' \not \in \mu_i \mathcal{O}_K; \\ 1/2: & \mu_i' \in \mu_i \mathcal{O}_K. \end{cases} \end{align*} This is a rational number because each $a_i$ lies in $\sqrt{m|A|} \cdot \mathbb{Q}$ and because each $\mu_i^2 - (\mu_i')^2$ and $(\mu_i \varepsilon_i)^2 - (\mu_i' \varepsilon_i')^2$ lies in $\sqrt{|A|} \cdot \mathbb{Q}$. \\

To see that $(R_{3,m,\beta})_{(m,\beta)}$ span $S_3(\rho^*)$ one can argue similarly to section 3. It is helpful to deform the Poincar\'e series to $$P_{k,m,\beta}^*(\tau,s) = \sum_{M \in \tilde \Gamma_{\infty} \backslash \tilde \Gamma} (y^s q^m \mathfrak{e}_{\beta}) \Big|_{k,\rho^*} M$$ (which is not the usual real-analytic deformation, as $y^s q^m$ is not harmonic!) and to use M\"obius inversion to see that $$P_{3,m,\beta}^*(\tau,s) - \frac{1}{2} \sum_{\lambda = 1}^{\infty} \lambda \mu(\lambda) \cdot \frac{1}{4\pi m i} \frac{\partial}{\partial z} \Big|_{z=0} E^*_{2,m,\beta}(\tau,z,s)$$ is well-defined and orthogonal to all cusp forms in $S_3(\rho^*)$ for every $\mathrm{Re}[s] > 0$. (For convergence in the range $\mathrm{Re}[s] > 0$ we need the full strength of the Deligne bound: the coefficients $c(n,\gamma)$ of a cusp form of weight three have growth bounded by $O(n^{1+\varepsilon})$ for every $\varepsilon > 0$.) Taking the limit as $s \rightarrow 0$ shows that $$P_{3,m,\beta} = \frac{1}{2} \sum_{\lambda=1}^{\infty} \lambda \mu(\lambda) R_{3,m,\beta} \in \overline{\mathrm{Span}(R_{3,m,\beta})_{m,\beta}} = \mathrm{Span}(R_{3,m,\beta})_{m,\beta}$$ so the series $R_{3,m,\beta}$ span all Poincar\'e series and therefore all of $S_3(\rho^*)$.
\end{proof}

\begin{ex} Suppose $(A,Q)$ is the cyclic quadratic module $A = \frac{1}{N}\mathbb{Z}/\mathbb{Z}$, $Q(x) = -Nx^2 + \mathbb{Z}$ with $N \equiv 1 \, (4)$. Then $\mathrm{sig}(A,Q) \equiv 0$ mod $8$ so $k=3$ is an antisymmetric weight. It was pointed out in \cite{W5} that the Jacobi Eisenstein series of weight two and the smallest index $(m,\beta) = (1/N,1/N)$ has the Fourier expansion

\begin{align*} E_{2,1/N,1/N}^*(\tau,z) &= \sum_{\gamma \in A} \sum_{n \in \mathbb{Z}-Q(\gamma)} \sum_{r \in \mathbb{Z} - \langle \gamma, 1/N \rangle} 12H(4n - Nr^2) q^n \zeta^r \mathfrak{e}_{\gamma} \\ &+ y^{-1/2} \sum_{\gamma \in A} \sum_{n \in \mathbb{Z} - Q(\gamma)} \sum_{r \in \mathbb{Z} - \langle \gamma, 1/N \rangle} A(n,r,\gamma) \beta(\pi y (Nr^2 - 4n)) q^n \zeta^r \mathfrak{e}_{\gamma}, \end{align*} where $H(d)$ is the Hurwitz class number (the number of $\mathrm{SL}_2(\mathbb{Z})$-equivalence classes of binary quadratic forms of discriminant $d$, each form weighted by $2/w$ where $w$ is the size of its automorphism group), and where $$A(n,r,\gamma) = \begin{cases} -24: & Nr^2 - 4n = 0; \\ -48: & Nr^2 - 4n \; \text{is a nonzero square}; \\ 0: & \text{otherwise}. \end{cases}$$ (Bear in mind that $4n - Nr^2$ in the above sum is always integral even though $n$ and $r$ are not.) In particular, the Fourier coefficients of $R_{3,1/N,1/N}$ are sums over Hurwitz class numbers. When $N \in \{5,9\}$ one can compute $S_3(\rho^*_{(A,Q)}) = 0$ and the identity $R_{3,1/N,1/N} = 0$ yields identities for class number sums which are more difficult to prove by other arguments. \\

We will give the computation for $N = 5$. The coefficient of $q^n$ in the naive expression $\tilde R_{3,1/N,1/N}$ is $$-30 \sum_{r \in \mathbb{Z}} (r + 4/5) H(4n - 5r^2 - 8r - 4) \; \text{or} \; -30 \sum_{r \in \mathbb{Z}} (r- 2/5) H(4n - 5r^2 + 4r - 4)$$ when $\gamma$ equals $1/5$ or $2/5$ mod $\mathbb{Z}$, respectively (and their negatives when $\gamma$ equals $4/5$ or $3/5$). Here we set $H(d) = 0$ for $d < 0$. Therefore we need to compute the correction term $\frac{1}{32m^{3/2}} \sum_r \mathrm{sgn}(r) a(n,r,\gamma) (|r| - \sqrt{r^2 - 4mn})^2$. Let $K = \mathbb{Q}(\sqrt{5})$. The ``congruent fundamental units" $\varepsilon_i$ above are all $(\frac{1 + \sqrt{5}}{2})^4 = \frac{7 + 3 \sqrt{5}}{2}$ and for $\mu = a+b \sqrt{5} \in \mathcal{O}_K$, we can compute $$\mu^2 - (\mu')^2 + (\mu \varepsilon_i)^2 - (\mu' \varepsilon_i')^2 = -3 \sqrt{5} (7a^2 - 30ab + 35b^2).$$ The series $r \in \mathbb{Z} - \langle \pm \gamma, \beta \rangle$ is easier to compute when rephrased as a sum over ideals in $\mathcal{O}_K$: after some algebra we find \begin{align*} &\quad \frac{1}{32 (1/5)^{3/2}} \sum_{r \in \mathbb{Z} - \langle \gamma, 1/5 \rangle} \mathrm{sgn}(r) a(n,r,\gamma) (|r| - \sqrt{r^2- 4n/5})^2 \\ &= \frac{1}{5} \sum_{N(\mathfrak{a}) = 5n} \Big( 7 (c^2 - a^2) - 30(|cd| - |ab|) + 35(d^2 -b^2) \Big), \end{align*} where $\mathfrak{a}$ runs through ideals of $\mathcal{O}_K = \mathbb{Z}[\frac{1 + \sqrt{5}}{2}]$ and where $a + b \sqrt{5}, c + d \sqrt{5} \in \mathfrak{a}$ are generators with minimal positive trace $2a, 2c > 0$ satisfying the following congruences: \\ (1) if $n \in 4/5 + \mathbb{Z}$ then $a \equiv 3 \, (5)$ and $c \equiv 2 \, (5)$; \\ (2) if $n \in 1/5 + \mathbb{Z}$ then $a \equiv 1 \, (5)$ and $c \equiv 4 \, (5)$. \\ Altogether, by comparing coefficients in $R_{3,1/5,1/5} = 0$ we obtain the identities

$$\sum_{r \in \mathbb{Z}} (r + 4/5) H(4n - 5r^2 - 8r) = -\frac{1}{150} \sum_{N(\mathfrak{a}) = 5n+4} \Big( 7 (c^2 - a^2) - 30(|cd| - |ab|) + 35(d^2 - b^2) \Big),$$ $$ \sum_{r \in \mathbb{Z}} (r - 2/5) H(4n - 5r^2 + 4r) = -\frac{1}{150} \sum_{N(\mathfrak{a}) = 5n+1} \Big( 7 (c^2 - a^2) - 30 (|cd| - |ab|) + 35(d^2 - b^2) \Big),$$ where $n \in \mathbb{N}_0$, and where $a,b,c,d$ are defined as above for any ideal $\mathfrak{a}$ of $\mathbb{Z}[\frac{1+\sqrt{5}}{2}]$. For example, when $n = 3$, the left-hand side of the equations above are $$\sum_{r \in \mathbb{Z}} (r + 4/5) H(4n - 5r^2 - 8r) = \frac{-6H(8) - H(15) + 4H(12)}{5} = -\frac{8}{15},$$ $$\sum_{r \in \mathbb{Z}} (r - 2/5) H(4n - 5r^2 + 4r) = \frac{-7H(3) - 2H(12) + 3H(11) + 8H(0)}{5} = -\frac{8}{15}.$$ The ideals of norm $5n+4 = 19$ are $(2 \sqrt{5} - 1)$ and $(2 \sqrt{5} + 1)$, and they have minimal-trace generators $$a + b \sqrt{5} = 8 + 3 \sqrt{5}, \; c + d \sqrt{5} = \frac{9}{2} + \frac{1}{2} \sqrt{5} \in (2 \sqrt{5} - 1),$$ $$a + b \sqrt{5} = 8 - 3 \sqrt{5}, \; c + d \sqrt{5} = \frac{9}{2} - \frac{1}{2} \sqrt{5} \in (2 \sqrt{5} + 1)$$ satisfying the congruence conditions. In particular, we find $$-\frac{1}{150} \sum_{N(\mathfrak{a}) = 19}  \Big( 7 (c^2 - a^2) - 30(|cd| - |ab|) + 35(d^2 - b^2) \Big) = -\frac{2}{150} \Big( 7 \cdot (81/4 - 64) - 30 \cdot (9/4 - 24) + 35 \cdot (1/4 - 9) \Big) = -\frac{8}{15}.$$ The only ideal of norm $16$ in $\mathcal{O}_K$ is $\mathfrak{a} = (4)$ with minimal-trace generators $$a + b \sqrt{5} = 6 + 2 \sqrt{5}, \; \; c + d \sqrt{5} = 4 \in \mathfrak{a},$$ so we find $$-\frac{1}{150} \sum_{N(\mathfrak{a}) = 4} \Big( 7 (c^2 - a^2) - 30(|cd| - |ab|) + 35(d^2 - b^2) \Big) = -\frac{1}{150} \Big( 7 \cdot (4^2 - 6^2) - 30 \cdot (0 - 12) + 35 \cdot (0 - 2^2) \Big) = -\frac{8}{15}.$$

By treating $N = 9$ similarly and focusing on the components $\gamma = 1/3,2/3$ of norm zero one obtains the identity $$\sum_{r \equiv a \, (3)} r H(4n - r^2) = \chi_3(a) \varepsilon(n) \sum_{d | n} \chi_3(d) \min(d, n/d)^2, \; n \in \mathbb{N}_0, \; a \in \mathbb{Z}/3\mathbb{Z},$$ where $\chi_3(n) = \left( \frac{n}{3}\right)$ is the Kronecker symbol and where $$\varepsilon(n) = \begin{cases} -1: & n \equiv 0 \, (3); \\ 1/2: & n \not \equiv 0 \, (3). \end{cases}$$ Here the correction terms are finite sums of expressions involving $d = (3/2) (|r| - \sqrt{r^2 - 4n/9})$, which is always an integral divisor of $n$ that is less than or equal to $n/d = (3/2) (|r| + \sqrt{r^2 -4n/9})$.
\end{ex}

Finally we consider the construction in weight $k=5/2$.

\begin{prop} Suppose $k = 5/2$. Then $$R_{5/2,m,\beta}(\tau) = \frac{1}{4\pi m i} \frac{\partial}{\partial z} \Big|_{z=0} E_{3/2,m,\beta}^*(\tau,z,0)$$ is a holomorphic cusp form with rational Fourier coefficients and $R_{5/2,m,\beta} - \tilde R_{5/2,m,\beta}$ is a weight $3/2$ theta series.
\end{prop}
We should emphasize that $R_{5/2,m,\beta} - \tilde R_{5/2,m,\beta}$ is a cusp form but its multiplier system is not related to $\rho^*$. (Its components are essentially of the form $\sum_{n=1}^{\infty} nq^{n^2}$, up to rescaling.) Therefore $\tilde R_{5/2,m,\beta}$ is not generally a modular form at all.
\begin{proof} Section 5 of \cite{W2} points out that the zero-value $E_{3/2,m,\beta}^*(\tau,z,0)$ is always a holomorphic Jacobi form (which may be identically zero) that differs from the naive result $E_{3/2,m,\beta}(\tau,z)$ of the coefficient formula of \cite{W} by a weight 1/2 theta series $\vartheta(\tau,z) = \sum_{\substack{n,r,\gamma \\ r^2 = 4mn}} a(n,r,\gamma) q^n \zeta^r \mathfrak{e}_{\gamma}$. Therefore \begin{align*} R_{5/2,m,\beta}(\tau) &= \frac{1}{4\pi m i} \frac{\partial}{\partial z} \Big|_{z=0} E^*_{k-1,m,\beta}(\tau,z,0) \\ &= \frac{1}{4\pi m i} \frac{\partial}{\partial z} \Big|_{z=0} E_{k-1,m,\beta}(\tau,z,0) + \frac{1}{2m} \sum_{\substack{n,r,\gamma \\ r^2 = 4mn}} r a(n,r,\gamma) q^n \mathfrak{e}_{\gamma} \\ &= \tilde R_{5/2,m,\beta}(\tau) + \frac{1}{2m} \sum_r a(r^2/4m,r,\gamma) r q^{r^2 / 4m} \mathfrak{e}_{\gamma}, \end{align*} and each component of the correction $R_{5/2,m,\beta} - \tilde R_{5/2,m,\beta}$ is a weight $3/2$ theta series of the form $\sum_r r q^{r^2 / 4m}$.
\end{proof}

\begin{rem} The proof that the family $(R_{k,m,\beta})_{m,\beta}$ spans $S_k(\rho^*)$ as $(m,\beta)$ runs through all valid indices does \emph{not} carry over to $k=5/2$, even if one assumes the strongest possible (unproven) bounds on the coefficients of cusp forms in this weight, since $s=0$ no longer lies in the closure of the half-plane of absolute convergence. In fact the spanning claim is almost certainly false. A specific discriminant form which seems to be a counterexample is $A = \mathbb{Z}/26\mathbb{Z}$ with quadratic form $Q(x) = \frac{1}{52}x^2 + \mathbb{Z}$. Using the theta decomposition (\cite{EZ}, Theorem 5.1), we can construct a weight 5/2 cusp form for $\rho^*_{(A,Q)}$ from the unique (up to multiples) weight 3 Jacobi form of index 13: $$\phi_3(\tau,z) = \Big( 9 \sin(2\pi z) + 7 \sin(4\pi z) - 17 \sin(6\pi z) + 4 \sin(8\pi z) + 7 \sin(10 \pi z) - 5 \sin(12 \pi z) + \sin(14\pi z) \Big) q + O(q^2).$$ Some computations make it seem likely that all $R_{5/2,m,\beta}$ are identically zero. Nevertheless there are many other discriminant forms for which the $R_{5/2,m,\beta}$ construction is nontrivial and even yields spanning sets.
\end{rem}

\section{Theta lifts}

The most important application of antisymmetric vector-valued modular forms is as inputs into the additive theta lift. This is a useful way to construct orthogonal modular forms which generalizes a number of better-known lifts (including the Shimura, Doi-Naganuma and Saito-Kurokawa lifts). \\

We recall this in the following simplified situation, in which we consider only lattices which split a unimodular plane and consider Fourier expansions at the distinguished cusp corresponding to this splitting. Suppose $\mathbf{S}$ is a symmetric integral matrix with even diagonal and signature $(1,\ell-1)$ for some $\ell \in \mathbb{N}$, and let $\Lambda$ be the lattice $\mathbb{Z}^{\ell}$ with quadratic form $Q(x) = \frac{1}{2} x^T \mathbf{S} x$. Fix a cone $C \subseteq \mathbb{C}^{\ell}$ of vectors with positive norm under $Q$. Then the tube domain $$\mathbb{H}_{\Lambda} = \{z = x+iy \in \mathbb{C}^{\ell} : \; y \in C\}$$ is acted upon by the discriminant kernel $$\Gamma_{\Lambda} = \{M \in \mathrm{SO}^+(\Lambda \oplus II_{1,1}): \; M \; \text{acts trivially on} \; \Lambda'/\Lambda\}$$ by M\"obius transformations in a natural way that induces a cocycle $j(M;z)$. More explicitly one can realize $\Gamma_{\Lambda}$ as a subgroup of integral matrices which preserve the quadratic form $\begin{psmallmatrix} 0 & 0 & 1 \\ 0 & \mathbf{S} & 0 \\ 1 & 0 & 0 \end{psmallmatrix}$ and define $$M \cdot z = w \; \text{if and only if} \; M \begin{psmallmatrix} -Q(z) \\ z \\ 1 \end{psmallmatrix} = j(M;z) \begin{psmallmatrix} -Q(w) \\ w \\ 1 \end{psmallmatrix}.$$ A modular form is a holomorphic function $f : \mathbb{H}_{\mathbf{S}} \rightarrow \mathbb{C}$ satisfying the functional equations $f(M \cdot z) = j(M;z)^k f(z)$ and the usual growth condition at the cusps of $\Gamma_{\mathbf{S}} \backslash \mathbb{H}_{\mathbf{S}}$. \\

Every modular form $f(z)$ has a Fourier expansion which we write in the form $$f(z) = \sum_{\lambda \in \Lambda'} a(\lambda) \mathbf{q}^{\lambda}, \; \; \mathbf{q}^{\lambda} = e^{2\pi i \langle \lambda, z \rangle} = e^{2\pi i \lambda^T \mathbf{S} z},$$ where $a(\lambda) = 0$ unless $\langle \lambda, y \rangle \ge 0$ for all $y \in C$. \\

Let $k \ge 2$, $k \in \mathbb{N}$. To any cusp form $F(\tau) = \sum_{\gamma \in \Lambda'/\Lambda} \sum_{n \in \mathbb{Z} + Q(\gamma)} c(n,\gamma) q^n \mathfrak{e}_{\gamma}$ of weight $\kappa = k + 1 - \ell/2$ for the \emph{non-dual} Weil representation attached to $\Lambda$ (which is, with our definition, the Weil representation attached to $(\Lambda,-Q)$), the theta lift $$\Phi_F(z) = \sum_{\substack{\lambda \in \Lambda' \\ \langle \lambda, C \rangle > 0}} \sum_{n=1}^{\infty} c(Q(\lambda),\lambda) n^{k-1} \mathbf{q}^{n \lambda}$$ is a cusp form for $\Gamma_{\Lambda}$ of weight $k$. Moreover the map $F \mapsto \Theta_F$ is injective. In particular the forms $R_{k,m,\beta}$ may be lifted to orthogonal cusp forms of odd weight. \\

Following Oda \cite{O} (see also Borcherds \cite{Bn}), one can write $\Phi_F$ as the integral $$\Phi_F(z) = \frac{1}{2} (2iQ(v))^{-k} \int_{\mathrm{SL}_2(\mathbb{Z}) \backslash \mathbb{H}} \langle F(\tau), \Theta_k(\tau,z) \rangle y^{k-1} \, \mathrm{d}x \, \mathrm{d}y, \; \; z = u+iv \in \mathbb{H}_{\Lambda},$$ where $$\Theta_k(\tau,z) = \sum_{\substack{a,c \in \mathbb{Z} \\ b \in \Lambda'}} (aQ(z) + \langle b,z \rangle + c)^k e^{-\frac{\pi y}{Q(v)} |aQ(z) + \langle b,z \rangle + c|^2 + 2\pi i \overline{\tau} (Q(b) -ac)} \, \mathfrak{e}_b$$ is the theta kernel. When $F = P_{\kappa,m,\beta}$, $\kappa = k+1-\ell/2 \ge 2$ is the Poincar\'e series, the Rankin-Selberg unfolding method yields \begin{align*} \Phi_F(z) &= \frac{1}{2}(2iQ(v))^{-k} \sum_{\substack{a,c \in \mathbb{Z} \\ b \in \Lambda + \beta \\ Q(b) - ac = m}} (aQ(\overline{z}) + \langle b, \overline{z} \rangle + c)^k \int_0^{\infty} e^{-\pi y |aQ(z) + \langle b,z \rangle + c|^2 / Q(v)} y^{k-1} \, \mathrm{d}y \\ &= \frac{1}{2} (2iQ(v))^{-k} \sum_{a,b,c} \overline{(aQ(z) + \langle b,z \rangle + c)^k} \frac{(k-1)! Q(v)^k}{\pi^k |aQ(z) + \langle b,z \rangle + c|^{2k}} \\ &= \frac{(k-1)!}{2} (2\pi i)^{-k} \sum_{a,b,c} (aQ(z) + \langle b,z \rangle + c)^{-k}. \end{align*}

When $k$ is odd (and $\kappa$ is at least $7/2$, so $k \ge (\ell + 5)/2$) we can apply this to the series $F = R_{\kappa,m,\beta} = 2 \sum_{\lambda=1}^{\infty} P_{k,\lambda^2 m, \lambda \beta}$ by first lifting each Poincar\'e series and then summing over $\lambda$, i.e. \begin{align*} \Phi_F(z) &= (k-1)! (2\pi i)^{-k} \sum_{\substack{a,c \in \mathbb{Q} \\ b \in \Lambda \otimes \mathbb{Q} \\ Q(b) - ac = m}} \sum_{\substack{\lambda \in \mathbb{N}\\ \lambda (a,b,c) \in \mathbb{Z} \oplus (\Lambda + \beta) \oplus \mathbb{Z}}} \lambda^{1-k} (aQ(z) + \langle b,z \rangle + c)^{-k}. \end{align*} Finally, by replacing each tuple $(a,b,c,\lambda)$ in the expression above by $(a/\lambda,b/\lambda,c/\lambda,\lambda)$ we obtain the formula:

\begin{prop} The theta lift of $F = R_{\kappa,m,\beta}$ is $$\Phi_F(z) = (k-1)! (2\pi i)^{-k} \zeta(k-1) \sum_{\substack{a,c \in \mathbb{Q} \\ b \in \Lambda \otimes \mathbb{Q} \\ Q(b) - ac = m}} \mathrm{denom}(a,b-\beta,c)^{1-k} (aQ(z) + \langle b,z \rangle + c)^{-k},$$ where $d = \mathrm{denom}(a,b-\beta,c) \in \mathbb{N}$ is minimal such that $da,dc \in \mathbb{Z}$ and $d(b-\beta) \in \Lambda$, and where $\zeta(s)  = \sum_{n=1}^{\infty} n^{-s}$ is the Riemann zeta function.
\end{prop}

We will work out two examples in detail for lattices of signature $(2,1)$ and $(2,2)$ whose automorphic forms have classical interpretations as elliptic and Hilbert modular forms respectively.


\begin{ex} $\mathrm{PSL}_2(\mathbb{R})$ is identified with $\mathrm{SO}^+(2,1)$ through the adjoint action on its Lie algebra, and $\Gamma_0(N)$ preserves the lattice $\{\sqrt{N} \cdot \begin{psmallmatrix} x & y/N \\ z & -x \end{psmallmatrix}: \; x,y,z \in \mathbb{Z}\} \subseteq \mathfrak{sl}_2(\mathbb{R})$, which (with the Killing form) is isometric to $A_1(N) \oplus II_{1,1}$. Therefore, taking $\mathbf{S} = (2N)$, we obtain elliptic modular forms of level $\Gamma_0(N)$ as theta lifts. \\

When $N = 1$ there are no antisymmetric modular forms for $\rho^*_{\Lambda}$. For $N = 2$ one can identify the input vector-valued forms of antisymmetric weight $k \in 3/2 + 2\mathbb{Z}$ with scalar-valued modular forms of level $1$ and weight $k-3/2$ by the isomorphism $$M_{k-3/2} \stackrel{\sim}{\longrightarrow} S_k(\rho^*), \; \; f(\tau) \mapsto f(\tau) \eta(\tau)^3 (\mathfrak{e}_{1/4} - \mathfrak{e}_{3/4}),$$ where $\eta(\tau) = q^{1/24} \prod_{n=1}^{\infty} (1 - q^n)$ is the Dedekind eta function. For example, in weight $11/2$ our construction yields the input function $$R_{11/2,1/8,(3/4)}(\tau) = E_4(\tau) \eta(\tau)^3 (\mathfrak{e}_{1/4} - \mathfrak{e}_{3/4}) = (q^{1/8} + 237q^{9/8} + 1440q^{17/8} \pm ...) (\mathfrak{e}_{1/4} - \mathfrak{e}_{3/4}).$$ Writing this in the form $F(\tau) = \sum_{n=1}^{\infty} c(n) q^{n/8} (\mathfrak{e}_{1/4} - \mathfrak{e}_{3/4})$ we obtain the theta lift $$\Phi_F(z) = \sum_{n=1}^{\infty} \sum_{d|n} \Big( \frac{-1}{d} \Big) c(d^2) (n/d)^4 q^n = q + 16q^2 - 156q^3 + 256q^4 \pm ... = \eta(z)^8 \eta(2z)^8 (2 E_2(2z) - E_2(z))$$ where $E_2(z) = 1 - 24 \sum_{n=1}^{\infty} \frac{n q^n}{1 - q^n}$. By unfolding the lift as above we obtain the identity $$\sum_{\substack{a,b,c \in \mathbb{Q} \\ b^2 - 4ac = 1}} \mathrm{denom}\Big( \frac{a}{2}, \frac{b-3}{4}, c \Big)^{-4} (az^2 + bz + c)^{-5} = 120 \pi i \cdot \eta(z)^8 \eta(2z)^8 (2E_2(2z) - E_2(z)).$$

Of course one can use the forms $Q_{k,m,\beta}$ defined in \cite{W} to compute expressions of this type for even $k$. For example, when $N=1$, computing the preimage of the discriminant $\Delta$ under the Shimura lift yields the expression $$\Delta(\tau) = -\frac{691}{2^6 \cdot 3^5 \cdot 7^2} \sum_{\substack{a,b,c \in \mathbb{Q} \\ b^2 - 4ac = 1}} \mathrm{denom}\Big( a, \frac{b-1}{2}, c\Big)^{-6} (a\tau^2 + b \tau + c)^{-6}.$$
\end{ex}

\begin{ex} Suppose $K = \mathbb{Q}(\sqrt{D})$ is a real-quadratic number field of discriminant $D \equiv 1 \, (4)$. Let $\Lambda = \mathcal{O}_K$ be its lattice of integers with quadratic form given by the norm $N_{K/\mathbb{Q}}$. There is a well-known embedding of $\mathrm{PSL}_2(\mathcal{O}_K)$ into the orthogonal modular group $\Gamma_{\Lambda}$, under which translations $T_b = \begin{psmallmatrix} 1 & b \\ 0 & 1 \end{psmallmatrix}$ by $b \in \mathcal{O}_K$ correspond again to translations and under which $S = \begin{psmallmatrix} 0 & -1 \\ 1 & 0 \end{psmallmatrix}$ corresponds to the block matrix $\begin{psmallmatrix} 0 & 0 & -1 \\ 0 & J & 0 \\ -1 & 0 & 0 \end{psmallmatrix}$ where $J : \mathcal{O}_K \rightarrow \mathcal{O}_K$ is the orthogonal reflection along the vector $1$. (By a theorem of Vaserstein, $\mathrm{PSL}_2(\mathcal{O}_K)$ is generated by $S$ and $T_b$.) In this sense, orthogonal modular forms are the same as Hilbert modular forms which satisfy some extra symmetry conditions. \\

When $K$ has prime discriminant $p$ and the weight $k$ is even, Bruinier and Bundschuh \cite{BB} have given an equivalence between vector-valued modular forms (of symmetric weights) for the Weil representation attached to $(\Lambda,Q)$ and scalar-valued modular forms for $\Gamma_0(p)$ with the quadratic Nebentypus whose Fourier coefficients are supported on quadratic residues mod $p$. Through this interpretation the theta lift becomes the well-known Doi-Naganuma lift from $M_*(\Gamma_0(p))$ to $M_*(\mathrm{SL}_2(\mathcal{O}_K))$. This equivalence involves summing the components of the vector-valued modular forms and always yields zero in antisymmetric weights; in particular, one does not obtain Doi-Naganuma lifts in odd weight by lifting modular forms of level $\Gamma_0(p)$. (In view of \cite{SW} one can fix this by considering component sums which are twisted by odd Dirichlet characters, which yields a subspace of modular forms of level $\Gamma_0(p^2)$ which can be characterized through the Atkin-Lehner involution.) \\

In the simplest case we take the field $K = \mathbb{Q}(\sqrt{5})$. There is a unique (appropriately normalized) cusp form $s_5$ of weight $5$, which was constructed by Gundlach \cite{Gu} as the product of ten theta constants and whose divisor consists of a simple zero exactly on the orbit of the diagonal $\{(\tau,\tau): \, \tau \in \mathbb{H}\}$. From the formula of Theorem 1.2 we find an input form in weight $5$ for the Gram matrix $\mathbf{S} = \begin{psmallmatrix} 2 & 1 \\ 1 & -2 \end{psmallmatrix}$: $$R_{5,1/5,(2/5,1/5)}(\tau) = (q^{1/5} + 42q^{6/5} \pm ...)(\mathfrak{e}_{(2/5,1/5)} - \mathfrak{e}_{3/5,4/5}) + (-26q^{4/5} - 39q^{9/5} \pm ...) (\mathfrak{e}_{(1/5,3/5)} - \mathfrak{e}_{(4/5,2/5)}),$$ whose theta lift must equal $s_5$. As before, the fact that $s_5$ is obtained by lifting a form $R_{k,m,\beta}$ yields an identity, here $$s_5(\tau_1,\tau_2) = \frac{1}{120\pi i} \sum_{\substack{a,c \in \mathbb{Q} \\ b \in \mathbb{Q}(\sqrt{5}) \\ N_{K/\mathbb{Q}}(b) - ac = 1/5}} \mathrm{denom}\Big(a, b - \frac{1}{2} - \frac{1}{\sqrt{5}}, c \Big)^{-4} (a \tau_1 \tau_2 + b \tau_1 + b' \tau_2 + c)^{-5},$$ where $d = \mathrm{denom}(a,b,c)$ is minimal such that $da, dc \in \mathbb{Z}$ and $db \in \mathcal{O}_K$.
\end{ex}

\bibliographystyle{plainnat}
\bibliofont
\bibliography{\jobname}

\end{document}